\newtheorem{theorem}{Theorem}
\newtheorem{lemma}[theorem]{Lemma}
\newtheorem{proposition}[theorem]{Proposition}
\numberwithin{equation}{section}
\def\span{\operatorname{span}}
\newcommand{\Vir}{\mathrm{Vir}}
\newcommand{\C}{\ensuremath{\mathbb C}\xspace}
\newcommand{\Q}{\ensuremath{\mathbb{Q}}\xspace}
\renewcommand{\a}{\ensuremath{\alpha}}
\renewcommand{\b}{\ensuremath{\beta}}
\newcommand{\Z}{\ensuremath{\mathbb{Z}}\xspace}
\newcommand{\N}{\ensuremath{\mathbb{N}}\xspace}
\newcommand{\V}{\ensuremath{\mathfrak{V}}\xspace}
\newcommand{\R}{\ensuremath{\mathbb{R}}\xspace}
\renewcommand{\phi}{\varphi}
\renewcommand{\geq}{\geqslant}
\def\LL{\mathcal{L}}
\begin{document}
\title[Weight modules]{A  class of simple weight Virasoro modules}
\author{Genqiang Liu, Rencai Lu,   Kaiming Zhao}
\date{Sept.26, 2012}
\maketitle

\begin{abstract} For a simple module $M$ over the positive part of the Virasoro algebra
(actually for any simple module over some finite dimensional
solvable Lie algebras $\mathfrak{a}_r$)  and any   $\alpha\in\C$, a
class of weight modules $\mathcal {N}(M, \alpha)$ over the Virasoro
algebra are constructed. 
The necessary and sufficient condition for $\mathcal {N}(M, \a)$ to
be simple is obtained. We also determine the necessary and
sufficient conditions for two such irreducible Virasoro modules to
be isomorphic. Many examples for such irreducible Virasoro modules
with different features are provided. In particular the irreducible
weight Virasoro modules $\Gamma(\alpha_1, \a_2,\lambda_1,
\lambda_2)$ are defined on the polynomial algebra $\C[x]\otimes
\C[t,t^{-1}]$ for any $\alpha_1, \a_2, \lambda_1\lambda_2\in\C$ with
$\lambda_1$ or $\lambda_2$ nonzero. By   twisting the weight modules
$\mathcal {N}(M, \a)$ we also obtain nonweight simple Virasoro
modules $\mathcal {N}(M, \b)$ for any $\b\in\C[t,t^{-1}]$.
\end{abstract}

\vskip 10pt \noindent {\em Keywords:}  Virasoro algebra, weight
module

\vskip 5pt
\noindent
{\em 2000  Math. Subj. Class.:}
17B10, 17B20, 17B65, 17B66, 17B68

\vskip 10pt

\section{Introduction}
We denote by $\mathbb{Z}$, $\mathbb{Z}_+$, $\N$, $\R$ and
$\mathbb{C}$ the sets of  all integers, nonnegative integers,
positive integers, real numbers and complex numbers, respectively.
For a Lie algebra $\mathfrak{a}$ we denote by $U(\mathfrak{a})$ the
universal enveloping algebra of $\mathfrak{a}$.

Let $\mathfrak{V}$ denote the complex {\em Virasoro algebra}, that
is the Lie algebra with basis $\{\mathtt{c}, d_i:i\in\mathbb{Z}\}$
and the Lie bracket defined (for $i,j\in\mathbb{Z}$) as follows:
\begin{displaymath}
[d_i,d_j]=(j-i) d_{i+j}+\delta_{i,-j}\frac{i^3-i}{12}{c};\quad
[d_i,{c}]=0.
\end{displaymath}
The algebra $\mathfrak{V}$ is one of the most   important Lie
algebras both in mathematics and in mathematical physics, see for
example \cite{KR,IK} and references therein.
 The Virasoro algebra theory has been widely used in many physics areas
and other mathematical branches, for example, quantum physics [GO],
conformal field theory [FMS], Higher-dimensional WZW models [IKUX,
IUK], Kac-Moody algebras [K, MoP], vertex algebras [LL], and so on.

The representation theory on the Virasoro algebra has  attracted a
lot of attentions from mathematicians and physicists. There are two
classical families of simple Harish-Chandra $\mathfrak{V}$-modules:
highest weight modules (completely described in \cite{FF}) and the
so-called intermediate series modules. In \cite{Mt} it is shown that
these two families exhaust all simple weight Harish-Chandra modules.
In \cite{MZ1} it is even shown that the above modules exhaust all
simple weight modules admitting a nonzero finite dimensional weight
space. Very naturally, the next important task is to study simple
weight modules with infinite dimensional weight spaces. The first
such examples were constructed by taking  the tensor product of some
highest weight modules and some intermediate series modules in
\cite{Zh} in 1997 , and then Conley and Martin gave another class of
such examples with four parameters in \cite{CM} in 2001. Since then
(for the last decade) there were no other such irreducible Virasoro
modules found. The present paper is to construct a huge class of
such irreducible Virasoro modules.

At the same time for the last decade, various families of nonweight
simple Virasoro modules were studied in
\cite{OW,LGZ,LZ,FJK,Ya,GLZ,MW,OW2}. These (except the Weyl modules
in \cite{LZ}) are basically various versions of Whittaker modules
constructed using different tricks. In particular, all the above
Whittaker modules (but the ones in \cite{MW})and even more were
described in a uniform way in \cite{MZ3}. Now we briefly describe
the  main results in the present paper.

\

Denote by $\mathfrak{V}_+$ the Lie subalgebra of $\mathfrak{V}$
spanned by all $d_i$ with $i\geq 0$. For $r\in\mathbb{Z}_+$, denote
by $\mathfrak{V}^{(r)}_+$ the Lie subalgebra of $\mathfrak{V}$
generated by all $d_i$, $i>r$, and by $ \mathfrak{a}_r$ the quotient
algebra $\mathfrak{V}_+/\mathfrak{V}^{(r)}_+$. By $\bar d_i$ we
denote the image of $d_i$ in $ \mathfrak{a}_r$. A classification for
all irreducible modules over $ \mathfrak{a}_1$ was given in
\cite{Bl}, while  a classification for all irreducible modules over
$ \mathfrak{a}_2$ was recently obtained in \cite{MZ3}. The problem
is open for all other Lie algebras  $ \mathfrak{a}_r$ for $r\ge 2$.

\

The paper is organized as follows. In Sect.2, for any module $M$
over $\mathfrak{a}_r$ and  any $\alpha\in\C$, we define a weight
Virasoro module structure on the vector space $\mathcal {N}(M,
\a)=V\otimes \C[t^{\pm 1}]$. In Sect.3 we prove that the Virasoro
module $\mathcal {N}(M, \a)$ is simple if $r\ge 1$ and $M$ is an
infinite dimensional irreducible $\mathfrak{a}_r$-module. Thus we
obtain a huge class of irreducible weight Virasoro modules. We also
determine the necessary and sufficient conditions for two such
irreducible Virasoro modules to be isomorphic. In Sect.4 we compare
these irreducible weight modules $\mathcal {N}(M, \a)$ with other
known irreducible weight  Virasoro modules (\cite{CM, Zh}) to prove
that they are new. In Sect.5 we give concrete examples for such
irreducible Virasoro modules $\mathcal {N}(M, \a)$. In particular
the irreducible weight Virasoro modules $\Gamma(\alpha_1,
\a_2,\lambda_1, \lambda_2)$ are defined on the polynomial algebra
$\C[x]\otimes \C[t,t^{-1}]$ for any $\alpha_1, \a_2,
\lambda_1\lambda_2\in\C$ with $\lambda_1$ or $\lambda_2$ nonzero. In
Sect.6, by   twisting the weight modules $\mathcal {N}(M, \a)$ we
also obtain nonweight simple Virasoro modules $\mathcal {N}(M, \b)$
for any $\b\in\C[t,t^{-1}]$. These nonweight irreducible Virasoro
modules are also new.

\section{Constructing new weight Virasoro modules}

Let us start with a module $M$ over $\mathfrak{a}_r$ and $\alpha\in
\C$. We define a Virasoro module structure on the vector space
$\mathcal {L}(M, \a)=V\otimes \C[t^{\pm 1}]$ as follows
\begin{equation}\label{def} d_m (v\otimes t^n)=((\a+n+\sum_{i=0}^r(\frac{m^{i+1}}{(i+1)!}\bar d_i))v)\otimes t^{n+m},\end{equation}
\begin{equation}c(v\otimes t^n)=0,\end{equation}
where $m, n\in\Z, v\in M$. This will be verified after the next
example.

\

We will always write $v(n)=v\otimes t^n$ for short.

\

 \noindent{\bf{Example 1.}} When $r=0$, any irreducible module $M=\C v$ over
the Lie algebra $\mathfrak{a}_0$ is one dimensional and given by  a
scalar $b\in\C$ with the action $\bar d_0v=bv$. In this case,
$\mathcal {N}(M, \a)=\C v\otimes \C[t^{\pm 1}]$ and (2.1) becomes
\begin{equation} d_m v(n)=(\a+n+bm)v(n+m),\end{equation}
 which is exactly the module of intermediate series (see \cite{KR}).

\begin{proposition} The actions (2.1) and (2.2) make $\mathcal {N}(M, \a)$ into  a module over $\mathfrak{V}$.
\end{proposition}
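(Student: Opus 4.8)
The plan is to use that $c$ acts as zero on $\mathcal{N}(M,\a)$, which collapses the verification considerably. Each $d_m$ in (2.1) is manifestly a well-defined linear operator on $\mathcal{N}(M,\a)=M\otimes\C[t^{\pm1}]$, since $\sum_{i=0}^r\frac{m^{i+1}}{(i+1)!}\bar d_i$ is a genuine element of $\mathfrak{a}_r$ acting on $M$ and tensoring with $t^n$ merely shifts the exponent. Because $c\mapsto 0$, the relation $[d_m,c]=0$ is automatic and the central term $\delta_{m,-k}\frac{m^3-m}{12}c$ of the Virasoro bracket contributes nothing; hence it suffices to check the Witt relation $[d_m,d_k]\,v(n)=(k-m)d_{m+k}\,v(n)$ on each vector $v(n)$.

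To streamline the computation I would set $D(m)=\sum_{i=0}^r\frac{m^{i+1}}{(i+1)!}\bar d_i\in\mathfrak{a}_r$, so that (2.1) becomes $d_m\,v(n)=((\a+n+D(m))v)(n+m)$. Applying this twice and using that $\a+n$ is a scalar (so that only $D(m)$ and $D(k)$ may fail to commute), a short expansion gives
\[
(d_md_k-d_kd_m)\,v(n)=\big((k-m)(\a+n)+kD(k)-mD(m)+[D(m),D(k)]\big)v\,(n+m+k),
\]
whereas $(k-m)d_{m+k}\,v(n)=\big((k-m)(\a+n)+(k-m)D(m+k)\big)v\,(n+m+k)$. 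The scalar parts agree, so the proposition reduces to the single Lie-algebra identity
\[
[D(m),D(k)]=(k-m)D(m+k)-kD(k)+mD(m)\qquad\text{in }\mathfrak{a}_r.
\]

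The cleanest route to this identity is to recognise $D(m)$ as the degree-$\leq r$ truncation of a vector field. Under the correspondence $\bar d_i\leftrightarrow z^{i+1}\partial_z$ (for which $[z^{i+1}\partial_z,z^{j+1}\partial_z]=(j-i)z^{i+j+1}\partial_z$), the formal sum $\sum_{i\geq0}\frac{m^{i+1}}{(i+1)!}z^{i+1}\partial_z$ equals $(e^{mz}-1)\partial_z$, and a direct computation of $[(e^{mz}-1)\partial_z,(e^{kz}-1)\partial_z]$ returns exactly $(k-m)(e^{(m+k)z}-1)\partial_z-k(e^{kz}-1)\partial_z+m(e^{mz}-1)\partial_z$, which is the desired identity at the level of formal fields. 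To descend to $\mathfrak{a}_r$ I would split $\sum_{i\geq0}=\sum_{i=0}^r+\sum_{i>r}$ and note that in every bracket $[\bar d_i,\bar d_j]=(j-i)\bar d_{i+j}$ with $\max(i,j)>r$ one has $i+j>r$, so $\bar d_{i+j}=0$ in $\mathfrak{a}_r$; thus the extra contributions distinguishing the truncated bracket $[D(m),D(k)]$ from the formal-field bracket all vanish, and projecting the formal identity to $\mathfrak{a}_r$ yields the required relation.

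The step most likely to cause trouble is precisely this passage to the quotient: one must be certain that truncating at $i\leq r$ loses no contribution to a surviving generator $\bar d_s$ with $s\leq r$. This is safe because such a $\bar d_s$ can only arise from $[\bar d_i,\bar d_j]$ with $i+j=s\leq r$, forcing $i,j\leq r$, so it already appears in $[D(m),D(k)]$, while every tail and cross term lands in $\mathfrak{V}^{(r)}_+$ and dies. A reader preferring to avoid the vector-field heuristic altogether can verify the identity directly by comparing the coefficient of each $\bar d_s$ ($0\leq s\leq r$), which reduces it to the elementary polynomial relation
\[
\sum_{i+j=s}\frac{(j-i)\,m^{i+1}k^{j+1}}{(i+1)!\,(j+1)!}=(k-m)\frac{(m+k)^{s+1}}{(s+1)!}-k\,\frac{k^{s+1}}{(s+1)!}+m\,\frac{m^{s+1}}{(s+1)!},
\]
which follows from a routine binomial expansion of $(m+k)^{s+1}$.
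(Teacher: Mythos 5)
Your proposal is correct. Like the paper, you verify the Witt relation directly on each basis vector $v(n)$ (the central term being harmless since $c$ acts by zero), and your reduction to the single identity $[D(m),D(k)]=(k-m)D(m+k)-kD(k)+mD(m)$ in $\mathfrak{a}_r$, where $D(m)=\sum_{i=0}^r\frac{m^{i+1}}{(i+1)!}\bar d_i$, is exactly the content of the paper's displayed computation once the scalar terms are stripped off. Where you genuinely differ is in how that identity is established: the paper expands $\sum_{i,j}\frac{m^{i+1}k^{j+1}}{(i+1)!(j+1)!}(j-i)\bar d_{i+j}$, splits $(j-i)$ as $(j+1)-(i+1)$, reindexes the double sum, and reassembles $(k-m)\frac{(m+k)^{i+1}}{(i+1)!}$ via the binomial theorem; you instead recognise $D(m)$ as the degree-$\leq r$ truncation of the vector field $(e^{mz}-1)\partial_z$ and read the identity off from a two-line bracket of exponential vector fields, then check that truncation is harmless. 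Your quotient argument is sound: a surviving generator $\bar d_s$ with $s\leq r$ only receives contributions from $[\bar d_i,\bar d_j]$ with $i+j=s$, which forces $i,j\leq r$, so the truncated and formal brackets project to the same element of $\mathfrak{a}_r$. Your approach has the virtue of explaining \emph{why} formula (2.1) works (the coefficients of $d_m$ are those of $e^{mz}-1$), at the cost of the extra descent step; the paper's computation is self-contained but more opaque. Your fallback coefficient-by-coefficient identity is also correct and is precisely what the paper's reindexing proves.
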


\begin{proof} We need to verify that $[d_m,d_k]v(n)=(d_md_k-d_kd_m)v(n)$ for all $m,n\in\Z$ and $v\in M$. By (2.1), we have
$$[d_m,d_k]v(n)=(k-m)(\a+n+\sum_{i=0}^r\frac{(m+k)^{i+1}}{(i+1)!}\bar d_i)v(n+m+k).$$
At the same time
$$\aligned &\ (d_md_k- d_kd_m)v(n)\\
=&\ \Big(\a+n+k+\sum_{i=0}^r\frac{m^{i+1}}{(i+1)!}\bar d_i\Big)
\Big(\a+n+\sum_{j=0}^r\frac{k^{j+1}}{(j+1)!}\bar d_j\Big)v(n+m+k)\\
-& \Big(\a+n+m+\sum_{i=0}^r\frac{k^{j+1}}{(j+1)!}\bar d_j\Big)
\Big(\a+n+\sum_{i=0}^r\frac{m^{i+1}}{(i+1)!}\bar d_i\Big)v(n+m+k)\\
=&(k-m)(\a+n)v(n+m+k)\\ &+\Big(k\sum_{i=0}^r\frac{k^{i+1}}{(i+1)!}
-m\sum_{i=0}^r\frac{m^{i+1}}{(i+1)!}\Big)\bar d_iv(n+m+k)\\
&\ +\sum_{i,j=0}^r\frac{m^{i+1}k^{j+1}}{(i+1)!(j+1)!}(j-i)\bar
d_{i+j}v(n+m+k)\endaligned$$
$$\aligned =&(k-m)(\a+n)v(n+m+k)\\
&+\Big(k\sum_{i=0}^r\frac{k^{i+1}}{(i+1)!}
-m\sum_{i=0}^r\frac{m^{i+1}}{(i+1)!}\Big)\bar d_iv(n+m+k)\\
&\ +\left(k\sum_{i,j=0}^r\frac{m^{i+1}k^j}{(i+1)!j!}\bar d_{i+j}
-m\sum_{i,j=0}^r\frac{m^ik^{j+1}}{i!(j+1)!}\bar
d_{i+j}\right)v(n+m+k)\\
=&(k-m)(\a+n)v(n+m+k)\\ &+\Big(k\sum_{i=0}^r\frac{k^{i+1}}{(i+1)!}
-m\sum_{i=0}^r\frac{m^{i+1}}{(i+1)!}\Big)\bar d_iv(n+m+k)\\
&\ +k\sum_{i=0}^r\sum_{j=0}^i\frac{m^{i+1-j}k^j}{(i+1-j)!j!}\bar
d_iv(n+m+k)\\
&\ -m\sum_{i=0}^r\sum_{j=0}^i\frac{k^{i+1-j}m^j}{(i+1-j)!j!}\bar
d_{i}v(n+m+k)\\
=&\ (k-m)(\a+n) v(n+m+k)\\
&\ +k\sum_{i=0}^r\sum_{j=0}^{i+1}\frac{m^{i+1-j}k^j}{(i+1-j)!j!}\bar
d_{i}v(n+m+k)\\ &\
-m\sum_{i=0}^r\sum_{j=0}^{i+1}\frac{k^{i+1-j}m^j}{(i+1-j)!j!}\bar
d_{i}v(n+m+k)\\ =&\ (k-m)\Big(\a+n
+\sum_{i=0}^r\frac{(m+k)^{i+1}}{(i+1)!}\bar d_{i}\Big)v(n+m+k).
\endaligned$$
Then $[d_m,d_k]v(n)=(d_md_k- d_kd_m)v(n)$. Consequently,  $\mathcal
{L}(M, \a)$ is a module over $\mathfrak{V}$.
\end{proof}

From (2.1), if $M$ is infinite dimensional, then $\mathcal {N}(M, \a
)=\oplus_{n\in\Z}\mathcal {N}_{\a+n}$ is a weight Virasoro module
with infinite dimensional weight spaces $\mathcal
{N}_{\a+n}=M\otimes t^n$ where
$$\mathcal {N}_{\a+n}=\{v\in \mathcal {N}(M,
\a)\,\,|\,\,d_0v=(\a+n)v\}.$$

\section{Simplicity and the isomorphism classes of weight Virasoro modules $\mathcal {N}(M, \a)$}

In this section we will determine the simplicity and the isomorphism
classes of $\mathcal {N}(M, \a)$.

\begin{lemma}\label{trivial}Let $M$ be a simple module over $\mathfrak{a}_r$. Then
 either $\bar d_rM=0$ or the action of $\bar d_r$ on $M$ is
bijective.\end{lemma}
\begin{proof}It is straightforward to check that $\bar d_rM$ and
${\rm ann}_M(\bar d_r)=\{v\in M|\bar d_r v=0\}$ are submodules of
$V$. Then the lemma follows from the simplicity of $M$.
\end{proof}

For any $l,m\in \Z$,  by induction on $s\in \Z_+$ we define the
following important operators
$$\omega_{l,m}^{(0)}=d_{l-m}d_{m}\in U(\Vir),$$
$$\omega_{l,m}^{(1)}=\omega_{l,m+1}^{(0)}-\omega_{l,m}^{(0)},\,\,{\rm and}$$
$$\omega_{l,m}^{(s)}=\omega_{l,m+1}^{(s-1)}-\omega_{l,m}^{(s-1)}.$$

Thus \begin{equation}\omega_{l,m}^{(s)}=\sum_{i=0}^s
{s\choose{i}}(-1)^{s-i} d_{l-m-i}d_{m+i}\in U(\Vir).\end{equation}

\begin{lemma}\label{key-compute} Let $r\ge 0$, $M$ be a module over
$\mathfrak{a}_r$. Then for any $m,l\in \Z$, $s\in \Z_+$ we have
$$\omega_{l,m}^{(2r+2)}v(n)=(2r+2)!(-1)^{r+1}\bar d_r^2v(n+l),\forall v\in M,n\in
\Z,\,\, {\rm and}$$
$$\omega_{l,m}^{(s)}\mathcal {N}(M, \a)=0, \forall s>2r+2.$$
\end{lemma}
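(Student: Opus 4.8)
The plan is to recognize $\omega_{l,m}^{(s)}$ as an $s$-fold finite difference in the variable $m$ and then invoke the elementary fact that iterated differences annihilate polynomials of low degree. Write $\Delta_m$ for the forward difference $\Delta_m f(m) = f(m+1) - f(m)$; the recursion defining the operators says precisely that $\omega_{l,m}^{(s)} = \Delta_m^s\,\omega_{l,m}^{(0)}$ with $\omega_{l,m}^{(0)} = d_{l-m}d_m$, which is also what the explicit binomial expansion for $\omega_{l,m}^{(s)}$ encodes. The crucial preliminary observation is that, by (2.1), the operator $d_{l-m}d_m$ sends $v(n)$ into the single weight space $M\otimes t^{\,n+l}$ \emph{independently of} $m$: applying $d_m$ lands in $t^{\,n+m}$ and applying $d_{l-m}$ then shifts to $t^{\,n+m+(l-m)}=t^{\,n+l}$. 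Hence the whole family $\{\omega_{l,m}^{(0)}v(n)\}_m$ lives in one fixed copy of $M$, and we may treat $m\mapsto \omega_{l,m}^{(0)}v(n)$ as an $M$-valued function of the integer $m$ to which $\Delta_m^s$ is applied.

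Next I would compute $\omega_{l,m}^{(0)}v(n)$ explicitly from (2.1). Setting $A_p^{(k)} = \alpha + k + \sum_{i=0}^r \frac{p^{\,i+1}}{(i+1)!}\bar d_i \in U(\mathfrak{a}_r)$, one has $d_p\,v(k) = (A_p^{(k)}v)(k+p)$, so $\omega_{l,m}^{(0)}v(n) = \big(A_{l-m}^{(n+m)}A_m^{(n)}\,v\big)(n+l)$. The operator coefficient $P(m) := A_{l-m}^{(n+m)}A_m^{(n)}$ is a polynomial in $m$ with coefficients in $U(\mathfrak{a}_r)$, and a glance at the two factors shows $\deg_m A_m^{(n)} = \deg_m A_{l-m}^{(n+m)} = r+1$, whence $\deg_m P = 2r+2$.

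With this in hand the two assertions follow from the standard difference calculus: for a $U(\mathfrak{a}_r)$-valued polynomial $P(m)$ of degree $d$ one has $\Delta_m^s P = 0$ for every $s>d$, while $\Delta_m^{d} P$ is the constant equal to $d!$ times the leading coefficient of $P$. The first of these, applied with $d = 2r+2$, gives $\omega_{l,m}^{(s)}\mathcal N(M,\alpha) = 0$ for all $s>2r+2$. The second, applied with $s=d=2r+2$, reduces the remaining identity to the computation of the top coefficient of $P(m)$.

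The main computation—and the only genuinely delicate point—is extracting that leading coefficient. The degree-$(2r+2)$ part of $P(m)$ can arise only from the product of the top (degree $r+1$) parts of the two factors, namely $\frac{(-1)^{r+1}}{(r+1)!}\bar d_r\, m^{r+1}$ from $A_{l-m}^{(n+m)}$ (the sign coming from $(l-m)^{r+1}$) and $\frac{1}{(r+1)!}\bar d_r\, m^{r+1}$ from $A_m^{(n)}$, whose product contributes $\frac{(-1)^{r+1}}{((r+1)!)^2}\bar d_r^{\,2}\, m^{2r+2}$. Two points require care: first, $A_{l-m}^{(n+m)}$ and $A_m^{(n)}$ do not commute for $r\ge 1$, so one must check that the top-degree term involves only $\bar d_r$ paired with itself, which it does since every other generator contributes strictly lower degree in $m$; and second, for small $r$ the scalar summand $\alpha+n+m$ inside $A_{l-m}^{(n+m)}$ can reach the same $m$-degree as some $\bar d_i$-terms, so one verifies it never affects the degree-$(2r+2)$ coefficient. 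Multiplying this leading coefficient by $(2r+2)!$ via $\Delta_m^{2r+2}$ then identifies $\omega_{l,m}^{(2r+2)}v(n)$ as the asserted scalar multiple of $\bar d_r^{\,2}v(n+l)$, completing the proof.
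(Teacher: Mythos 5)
Your overall strategy is exactly the paper's: view $\omega_{l,m}^{(s)}$ as the $s$-fold forward difference in $m$ of $\omega_{l,m}^{(0)}v(n)$, observe that the latter is a $U(\mathfrak{a}_r)$-valued polynomial in $m$ of degree $2r+2$ landing in the fixed weight space $M\otimes t^{n+l}$, and read off both claims from the vanishing of high-order differences and from the leading coefficient. The second assertion ($\omega_{l,m}^{(s)}\mathcal{N}(M,\alpha)=0$ for $s>2r+2$) is established correctly and exactly as in the paper.

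For the first assertion, however, your own computation does not produce the constant you claim it does, and you should not have waved this away in the last sentence. You correctly find that the degree-$(2r+2)$ coefficient of $P(m)=A_{l-m}^{(n+m)}A_m^{(n)}$ is $\frac{(-1)^{r+1}}{((r+1)!)^2}\bar d_r^{\,2}$ (for $r\ge 1$); applying $\Delta_m^{2r+2}$ multiplies this by $(2r+2)!$, giving $\binom{2r+2}{r+1}(-1)^{r+1}\bar d_r^{\,2}v(n+l)$, which is \emph{not} the asserted $(2r+2)!\,(-1)^{r+1}\bar d_r^{\,2}v(n+l)$ unless $r=0$. (The paper's own proof makes the same slip: it records the leading term of the product as $(-1)^{r+1}m^{2r+2}\bar d_r^{\,2}$, silently dropping the factor $((r+1)!)^{-2}$.) Since only the nonvanishing and injectivity of $\omega_{l,m}^{(2r+2)}$ are used later, the discrepancy is harmless for the paper, but a proof must state the constant it actually derives rather than assert agreement with the statement. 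Separately, your parenthetical assurance that the scalar summand $\alpha+n+m$ ``never affects the degree-$(2r+2)$ coefficient'' fails precisely for $r=0$, where it contributes an extra $+\bar d_0$ to the $m^2$-coefficient, so that $\omega_{l,m}^{(2)}v(n)=2(\bar d_0-\bar d_0^{\,2})v(n+l)$ rather than $-2\bar d_0^{\,2}v(n+l)$; as the lemma is stated for all $r\ge 0$, this case must be either corrected or excluded.
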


\begin{proof} Let us fix  $l,n\in \Z$ and $v\in M$. We have
$$\aligned &\ \omega_{l,m}^{(0)}v(n)=d_{l-m}d_mv(n)\\=&
  \Big(\a+n+m+\sum_{i=0}^r\frac{(l-m)^{j+1}}{(j+1)!}\bar d_j\Big)
\Big(\a+n+\sum_{i=0}^r\frac{m^{i+1}}{(i+1)!}\bar
d_i\Big)v(n+l).\endaligned$$ Note that the righthand of the equation
is a polynomial of the variable $m$ and has degree $2r+2$, which can
be written as
$$ (-1)^{r+1} m^{2r+2} \bar
d_r^2v(n+l)+(\sum_{j=0}^{2r+1}m^j v_{j,0}(n+l)),$$ where $v_{j,0}\in
U(\mathfrak{a}_r)v$ is independent of $m$. By induction on $s$ we
can easily see that
$$\omega_{l,m}^{(s)}v(n)=(-1)^{r+1}(2r+2)(2r+1)...(2r+3-s)) m^{2r+2-s} \bar
d_r^2v(n+l)$$ $$+(\sum_{j=0}^{2r+1-s}m^j v_{j,s}(n+l)),\hskip 4cm $$
where $v_{j,s}\in U(\mathfrak{a}_r)v$ is independent of $m$. Now the
lemma follows from this formula.
\end{proof}

Let $M$ be a module over $\mathfrak{a}_r$ such that $\mathcal {N}(M,
\a)$ is simple. From (\ref{def}), we know that $M$ is a simple
module over $\mathfrak{a}_r$. From Example 1 and Lemma
\ref{trivial}, we may further assume that $r\ge 1$ and the action of
$\bar d_r$ on $M$ is bijective.

Now we can prove our main result in this section.

\begin{theorem} Let $r\ge 1$, $M$ be a simple module over
$\mathfrak{a}_r$ such that the action of $\bar d_r$ on $M$ is
injective. Then for any $\a\in \C$, the weight Virasoro module
$\mathcal {N}(M, \a)$ is simple.
\end{theorem}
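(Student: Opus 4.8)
The plan is to prove simplicity by showing that an arbitrary nonzero submodule $W\subseteq\mathcal N(M,\alpha)$ must be all of $\mathcal N(M,\alpha)$. First I would record that, for the purposes of this argument, $\mathcal N(M,\alpha)$ really is a weight module: since $m^{i+1}=0$ at $m=0$ for every $i\geq 0$, formula (2.1) gives $d_0v(n)=(\alpha+n)v(n)$, so $M\otimes t^n$ is exactly the $d_0$-eigenspace of eigenvalue $\alpha+n$. As these eigenvalues are pairwise distinct, any submodule is graded, $W=\bigoplus_{n\in\Z}W_n$ with $W_n:=W\cap(M\otimes t^n)$, which I identify throughout with a subspace of $M$.

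Next I would populate every weight space. Choose $n_0$ with $W_{n_0}\neq0$ and $0\neq v\in W_{n_0}$. By Lemma \ref{key-compute}, for each $l\in\Z$ (and any fixed $m$) the element $\omega_{l,m}^{(2r+2)}\in U(\mathfrak V)$ sends $v(n_0)$ to $(2r+2)!(-1)^{r+1}\bar d_r^2\,v(n_0+l)$, i.e. a nonzero scalar multiple of $\bar d_r^2v$ placed in degree $n_0+l$. Since $\bar d_r$ is injective, $\bar d_r^2v\neq0$, so $\bar d_r^2v\in W_n$ for \emph{every} $n$. The same computation applied to any $u\in W_{n'}$ shows $\bar d_r^2W_{n'}\subseteq\bigcap_{n\in\Z}W_n=:V_\infty$; in particular $V_\infty\neq0$. (Note only injectivity of $\bar d_r$ is used here, matching the hypothesis.)

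The heart of the proof, and the step I expect to be the main obstacle, is to show that $V_\infty$ is an $\mathfrak a_r$-submodule of $M$. The difficulty is that a single degree-preserving element of $U(\mathfrak V)$, built from balanced products such as $d_{-m}d_m$, acts on a weight space through \emph{products} of the $\bar d_i$ rather than through a single $\bar d_i$, so one cannot read off the individual generators $\bar d_0,\dots,\bar d_r$ directly. Passing to $V_\infty$ is precisely what removes this obstacle: if $w\in V_\infty$ then $w(n-m)\in W$ simultaneously for \emph{all} $m\in\Z$, hence $d_m\big(w(n-m)\big)\in W_n$ for all $m$. The left side equals $\big((\alpha+n-m)w+\sum_{i=0}^r\frac{m^{i+1}}{(i+1)!}\bar d_iw\big)(n)$, a polynomial in $m$ of degree $r+1$ whose values lie in the subspace $W_n$ at infinitely many integers $m$; a Vandermonde (finite-difference) argument then forces every coefficient into $W_n$, giving $\bar d_iw\in W_n$ for all $i=0,\dots,r$. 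Since $n$ is arbitrary, $\bar d_iw\in V_\infty$, so $V_\infty$ is $\mathfrak a_r$-stable.

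Finally I would conclude: $V_\infty$ is a nonzero $\mathfrak a_r$-submodule of the simple module $M$, so $V_\infty=M$. Therefore $W_n\supseteq V_\infty=M$, hence $W_n=M$, for every $n$, and thus $W=\mathcal N(M,\alpha)$. This establishes that $\mathcal N(M,\alpha)$ is simple.
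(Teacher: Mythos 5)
Your proposal is correct and follows essentially the same route as the paper's proof: your $V_\infty$ is exactly the paper's $W^{(0)}=\bigcap_n W_n$, your use of $\omega_{l,m}^{(2r+2)}$ together with Lemma \ref{key-compute} to show $V_\infty\neq 0$ is Claim 1, and your polynomial-in-$m$ (Vandermonde) argument showing $V_\infty$ is $\mathfrak a_r$-stable is Claim 2. No substantive differences.
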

\begin{proof} Suppose $W$ is a nonzero Virasoro submodule of $\mathcal
{L}(M, \a)$. It suffices to show that $W=\mathcal {N}(M, \a)$. Since
$\mathcal {N}(M, \a ) =\oplus_{n\in\Z}\left(M\otimes t^{n}\right)$
is a weight Virasoro module, then $W= \oplus_{n\in\Z}W_n\otimes
t^{n}$ where $W_n$ are subspaces of $M$. Let
$W^{(0)}=\cap_{n\in\Z}W_n$. We will prove that $W^{(0)}$ is a
nonzero $\mathfrak{a}_r$-submodule.

\

{\bf Claim 1:} $W^{(0)}$ is nonzero.

Since $W\ne 0$, we may assume that $W_n\ne0$ for some $n\in\Z$. Take
a nonzero $v\in W_n$. From Lemma \ref{key-compute}, we see that
$$\omega_{l,m}^{(2r+2)}v(n)=(2r+2)!(-1)^{r+1}\bar d_r^2v(n+l)\in W, \forall l\in \Z.$$ Hence $\bar d_r^2v\in W^{(0)}$,
which is nonzero since the action of $\bar d_r$ on $M$ is injective.
Claim 1 follows.

\

{\bf Claim 2:}  $W^{(0)}$ is an  $\mathfrak{a}_r$-submodule.

For any  $v\in W^{(0)}$, we have $v(k-m)\in W_{k-m}$ for any
$k,m\in\Z$. We know that
$$d_mv(k-m)=\Big(a+k-m+\sum_{i=0}^r\frac{m^{i+1}}{(i+1)!}\bar
d_i\Big)v(k)\in W_k\otimes t^k,$$ yielding
$$\sum_{i=0}^r\frac{m^{i+1}}{(i+1)!}\bar
d_i v \in W_k, \forall \,\,m\in\Z.$$ Hence $\bar d_iv\in W_k$ for
all $k\in \Z$ and $i=0,1,2,..., r$. Therefore $W^{(0)}$ is an
$\mathfrak{a}_r$-submodule.

\

 Since $M$ is a simple module
over $\mathfrak{a}_r$, we see that $W^{(0)}=M$, i.e., $W_n=M$ for
all $n\in\Z$. Consequently, $W=\mathcal {N}(M, \a)$ and $\mathcal
{N}(M, \a)$ is a simple Virasoro module
\end{proof}

\begin{theorem} Let $\a, \a'\in \C$, $r,r'\ge 1$, $M, M'$ be simple modules over
$\mathfrak{a}_r$, $\mathfrak{a}_{r'}$ respectively such that the
actions of $\bar d_r, \bar d_{r'}$ on $M, M'$ are injective
respectively. Then  the Virasoro modules $\mathcal {N}(M, \a)$ and $
\mathcal {N}(M', \a')$ are isomorphic if and only if $\a-\a'\in \Z$,
$r=r'$ and $M\cong M'$ as $\mathfrak{a}_r$ modules.
\end{theorem}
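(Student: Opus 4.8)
The plan is to prove the two implications separately, the forward (``if'') direction being a short explicit construction and the reverse (``only if'') direction carrying the real content.

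For the ``if'' direction, assume $\a-\a'=k\in\Z$, $r=r'$, and that $\phi\colon M\to M'$ is an isomorphism of $\mathfrak{a}_r$-modules. I would define $\Phi\colon \mathcal{N}(M,\a)\to\mathcal{N}(M',\a')$ by $\Phi(v\otimes t^n)=\phi(v)\otimes t^{n+k}$ and verify directly from \eqref{def} that $\Phi$ intertwines every $d_m$. The shift by $k$ exactly absorbs the discrepancy $\a+n=\a'+(n+k)$ between the scalar parts, while $\phi$ being an $\mathfrak{a}_r$-map handles the operator part $\sum_{i=0}^r\frac{m^{i+1}}{(i+1)!}\bar d_i$. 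Bijectivity of $\Phi$ is clear, so this direction is routine.

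For the ``only if'' direction, suppose $\Phi\colon\mathcal{N}(M,\a)\to\mathcal{N}(M',\a')$ is an isomorphism. Since $d_0$ acts on $M\otimes t^n$ by the scalar $\a+n$ and on $M'\otimes t^{n'}$ by $\a'+n'$, and $\Phi$ commutes with $d_0$, it must send each weight space into the weight space of the same weight; comparing the weight sets $\a+\Z$ and $\a'+\Z$ forces $\a-\a'\in\Z$. To pin down $r$, I would use that the universal operators $\omega^{(s)}_{l,m}\in U(\Vir)$ commute with $\Phi$: by Lemma \ref{key-compute} the largest $s$ for which $\omega^{(s)}_{l,m}$ acts nonzero on $\mathcal{N}(M,\a)$ is exactly $s=2r+2$, since it acts as $(2r+2)!(-1)^{r+1}\bar d_r^2$ (nonzero because $\bar d_r$, hence $\bar d_r^2$, is injective) and vanishes for $s>2r+2$. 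As this maximal index is an isomorphism invariant, $2r+2=2r'+2$, so $r=r'$.

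Composing $\Phi$ with the isomorphism $\mathcal{N}(M',\a')\cong\mathcal{N}(M',\a)$ furnished by the ``if'' direction, I may assume $\a=\a'$, whence $\Phi(v\otimes t^n)=\phi_n(v)\otimes t^n$ for linear isomorphisms $\phi_n\colon M\to M'$. Applying $\Phi$ to the identity of Lemma \ref{key-compute} and using that $\Phi$ commutes with $\omega^{(2r+2)}_{l,m}$ gives $\phi_{n+l}(\bar d_r^2 v)=(\bar d_r')^2\phi_n(v)$ for all $n,l,v$ (writing $\bar d_i'$ for the action on $M'$); setting $l=0$ and comparing shows $\phi_{n+l}(\bar d_r^2 v)=\phi_n(\bar d_r^2 v)$, and since $\bar d_r^2M=M$ by bijectivity, all the $\phi_n$ coincide with a single $\phi$. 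Feeding $\Phi(v\otimes t^n)=\phi(v)\otimes t^n$ into the intertwining relation from \eqref{def} cancels the scalar $\a+n$ and leaves $\phi\big(\sum_{i=0}^r\frac{m^{i+1}}{(i+1)!}\bar d_i v\big)=\sum_{i=0}^r\frac{m^{i+1}}{(i+1)!}\bar d_i'\phi(v)$ for every $m\in\Z$. Regarding both sides as polynomials in $m$ and matching coefficients of $m^{i+1}$ (the monomials $m^{i+1}$, $0\le i\le r$, being linearly independent) yields $\phi(\bar d_i v)=\bar d_i'\phi(v)$ for each $i$, i.e. $\phi$ is an isomorphism of $\mathfrak{a}_r$-modules and $M\cong M'$.

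The main obstacle is the reverse direction, specifically showing the weight-space maps $\phi_n$ are independent of $n$: the relation coming from \eqref{def} couples different weight spaces through $\phi_{n+m}$, so one cannot directly compare coefficients in $m$. The device that unlocks this is precisely the universal operators $\omega^{(s)}_{l,m}$ of Lemma \ref{key-compute}, which simultaneously isolate the invariant $r$ and, via the injectivity of $\bar d_r$, collapse the family $\{\phi_n\}$ to a single $\mathfrak{a}_r$-intertwiner; everything else reduces to polynomial bookkeeping in $m$.
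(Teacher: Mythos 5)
Your proposal is correct and follows essentially the same route as the paper: both directions hinge on the operators $\omega^{(s)}_{l,m}$ of Lemma \ref{key-compute}, used first to read off $r$ as an isomorphism invariant and then, via the action of $\bar d_r^2$, to collapse the weight-space maps to a single linear bijection, after which coefficient comparison in $m$ in the $d_m$-intertwining relation shows that bijection is an $\mathfrak{a}_r$-map. The only cosmetic difference is that you normalize $\a=\a'$ first and work with a family $\{\phi_n\}$ while the paper fixes $\psi$ on one weight space and propagates it; the surjectivity of $\bar d_r^2$ you invoke follows from Lemma \ref{trivial} (injective plus simple forces bijective), exactly as in the paper.
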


\begin{proof}The sufficiency of the condition is clear. Now suppose that
$\phi:\mathcal {N}(M, \a)\rightarrow \mathcal {N}(M', \a')$ is a Virasoro module isomorphism.
Comparing the set of weights, we have $\a+\Z=\a'+\Z$, that is,
$\a-\a'\in \Z$. Say $\a=\a'+n_0$. Denote
$$\phi(v(0))=\psi(v)(n_0),\,\,\forall\  v\in M,$$ where $\psi$ is a
bijective linear map from $M$ to $M'$. Then from
$\phi(\omega_{l,m}^{(s)}v(0))=\omega_{l,m}^{(s)}\phi(v(0))$ and
Lemma \ref{key-compute}, we have $r=r'$ and $$\phi(\bar d_r^2
v(l))=\bar d_r^2 \psi(v)(n_0+l),\forall l\in \Z, v\in M.$$ Recall
from Lemma \ref{trivial} that $\bar d_r$ is an isomorphism of $M$.
So  $$\phi(v(l))=\bar d_r^2\psi(\bar d_r^{-2} v)(n_0+l), \forall\
n\in \Z, v\in M,$$ where $\bar d_r^{-1}$ is the inverse mapping of
$\bar d_r$.

By taking $l=0$, we get $\bar d_r^2\psi(\bar d_r^{-2} v)=\psi (v)$
and
$$\phi(v(l))=\psi(v)(l+n_0), \forall l\in \Z,v\in M.$$
From $\phi(d_m v(-m))=d_m \phi(v(-m))$, we get
$$\psi(\Big(\a-m+\sum_{i=0}^r\frac{m^{i+1}}{(i+1)!}\bar
d_i\Big)v)(n_0)$$
$$=\Big(\a-m+\sum_{i=0}^r\frac{m^{i+1}}{(i+1)!}\bar
d_i\Big)\psi(v)(n_0),$$ yielding that

$$\sum_{i=0}^r\frac{m^{i+1}}{(i+1)!}(\psi(\bar d_i v)-\bar d_i
\psi(v)))=0.$$

Since $m$ is arbitrary, we have $\psi(\bar d_i v)=\bar d_i \psi(v)$
for $i=0,1,\ldots,r$. Thus we have proved that $\psi$ is an
$\mathfrak{a}_r$ module isomorphism, which completes the proof.
 \end{proof}

\section{Simple Virasoro modules $\mathcal {N}(M, \a)$ are new}

 Known irreducible weight Virasoro modules
with infinite dimensional weight spaces are the ones obtained by
some tensor product of an irreducible highest (or lowest) weight
module and an irreducible Virasoro module of the intermediate
series, see \cite{Zh}, and the ones defined in \cite{CM}. Let us
first recall these modules.

\

Let $U:=U(\V)$ be the universal enveloping algebra of the Virasoro
algebra $\V$. For any $\dot c, h\in \C$, let $I(\dot c,h)$ be the
left ideal of $U$ generated by the set $$
\bigl\{d_{i}\bigm|i>0\bigr\}\bigcup\bigl\{d_0-h\cdot 1, c-\dot
c\cdot 1\bigr\}. $$ The Verma module with highest weight $(\dot c,
h)$ for $\V $ is defined as the quotient  $\bar V(\dot
c,h):=U/I(\dot c,h)$. It is a highest weight module of $\V $ and has
a basis consisting of all vectors of the form $$
d_{-i_1}d_{-i_2}\cdots d_{-i_k}v_{h};\quad k\in{\N}\cup\{0\},
i_{j}\in\N, i_{k}\geq\cdots\geq i_2\geq i_1>0.
$$ Then we have the {\it irreducible highest weigh module} $ V(\dot c,h)=\bar V(\dot c,h)/J$
where $J$ is the maximal proper submodule of $\bar V(\dot c,h)$.

 For $a,b\in \C$,  the $\V $-modules $V_{a,b}$ has basis $\{v_{a +i}|i\in \Z\}$ with
trivial central actions and
$$  d_{i}v_{a+k}=(a+k+ib)v_{a+k+i}.$$ It is known
that $V_{a,b}\cong V_{a+1,b},$ for all $a,b\in\C$ and that $V_{a,0}
\cong V_{a,1}$ if $a\notin\Z$. It is also clear that $V_{0,0}$ has
$\C v_0$ as a submodule, we denote the quotient module $V_{0,0}/\C
v_0$ by $V'_{0,0}$. Dually, $V_{0,1}$ has $\C v_0$ as a quotient
module, and its corresponding submodule $\oplus_{i\neq 0}\C v_i$ is
isomorphic to $V'_{0,0}$. For convenience we simply write
$V'_{a,b}=V_{a,b}$ when $V_{a,b}$ is irreducible.

Let us recall a result from  \cite{Zh}.

\begin{theorem}If $\bar V(\dot c,h)$ is   reducible, and if $a$ is transcendental
over $\Q( \dot c,h, b)$ or $a$ is algebraic over $\Q( \dot c,h, b)$
with sufficiently large degree, then  $V(\dot c,h)\otimes V'_{a,b}$
is irreducible.\end{theorem}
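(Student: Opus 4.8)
The plan is to prove that the only nonzero $\V$-submodule of $V(\dot c,h)\otimes V'_{a,b}$ is the whole module. Write $L=V(\dot c,h)$ and grade it by level, $L=\bigoplus_{n\ge 0}L^{(n)}$, where $L^{(n)}$ is the weight space of weight $h-n$; thus $L^{(0)}=\C v_h$, while $d_i L^{(n)}\subseteq L^{(n-i)}$ for $i>0$ and $d_{-i}L^{(n)}\subseteq L^{(n+i)}$ for $i>0$. Since the $v_{a+k}$ are the one-dimensional weight spaces of $V'_{a,b}$, the tensor product is a weight module with support in $a+h+\Z$, and its weight-$(a+h+j)$ space is $\bigoplus_{n\ge 0}L^{(n)}\otimes v_{a+j+n}$. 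A nonzero submodule $W$ is $d_0$-stable, hence spanned by weight vectors; I write a typical one as $w=\sum_{n=0}^{N}u_n\otimes v_{a+j+n}$ with $u_n\in L^{(n)}$ and $u_N\neq 0$, and call $N$ its \emph{height}. The proof then splits into two parts: (i) $W$ contains a vector of height $0$, that is, some $v_h\otimes v_{a+k}$; and (ii) any such vector generates the whole module.

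Part (i) is the heart of the matter. First I record the effect of a positive operator: for $s\ge 1$ one computes from $d_s v_{a+l}=(a+l+sb)v_{a+l+s}$ and $d_sL^{(n)}\subseteq L^{(n-s)}$ that $d_s w$ is again a weight vector of height exactly $N$, with top component $(a+j+N+sb)\,u_N\otimes v_{a+j+s+N}$. So a single positive operator neither raises the height nor lowers it; it merely rescales the top component by a linear factor in $a$. The crucial observation is that all products of positive operators of a fixed total degree $s$ land in the \emph{same} weight space and have top component a scalar multiple of the \emph{same} vector $u_N\otimes v_{a+j+s+N}$, the scalar being a product of linear factors in $a$ read off from the factorization; for instance, in degree $2$ the operators $d_1^2$ and $d_2$ contribute top scalars $(a+j+N+b)(a+j+N+1+b)$ and $(a+j+N+2b)$ respectively. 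I would then choose a nonzero combination $\sum_\beta c_\beta D_\beta w\in W$ of such products $D_\beta$ whose top scalars cancel, which kills the level-$N$ component and produces a vector of height $<N$. The nontrivial point is that this vector is \emph{nonzero}: its subleading components are values at $a$ of fixed polynomials over $\Q(\dot c,h,b)$, and this is where the hypothesis on $a$ is used decisively. Transcendence of $a$ over $\Q(\dot c,h,b)$ --- or algebraicity of degree exceeding the degrees of all these control polynomials, which is what ``sufficiently large degree'' means --- guarantees that the relevant polynomial does not vanish at $a$, so the height strictly drops. Iterating down to height $0$ yields a vector $u_0\otimes v_{a+k}$ with $u_0\in L^{(0)}=\C v_h$, proving (i).

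For part (ii), suppose $v_h\otimes v_{a+k}\in W$. Since $v_h$ is a highest weight vector, $d_s v_h=0$ for $s>0$, so $d_s(v_h\otimes v_{a+k})=(a+k+sb)\,v_h\otimes v_{a+k+s}\in W$; as $a+k+sb\neq 0$ by genericity of $a$, the top line is accessible in the positive direction. Combining such vectors with lowering operators $d_{-i}$ and using the irreducibility of $V(\dot c,h)$, so that $L=U(\V)v_h$, a routine induction --- on the level $n$ together with the intermediate index, with the genericity of $a$ ensuring that the accompanying coefficients never vanish --- shows that $L^{(n)}\otimes v_{a+l}\subseteq W$ for all $n\ge 0$ and all $l\in\Z$. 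Concretely, for $u=d_{-i_1}\cdots d_{-i_p}v_h\in L^{(n)}$ with $i_1+\cdots+i_p=n$, applying the same product to $v_h\otimes v_{a+l}$ produces $u\otimes v_{a+l}$ plus terms of level $<n$, the latter already lying in $W$ by the induction hypothesis. Hence $W=L\otimes V'_{a,b}$.

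The main obstacle is clearly part (i): one must understand the full height profile of the combinations $\sum_\beta c_\beta D_\beta w$ well enough to certify that, after cancelling the top level, something survives at a lower level. This reduces to the non-vanishing at $a$ of an explicit but complicated polynomial over $\Q(\dot c,h,b)$, and it is precisely to control this polynomial --- that it is not identically zero, and that its degree is bounded --- that the hypothesis that $\bar V(\dot c,h)$ is reducible, together with the transcendence or large-degree assumption on $a$, is needed.
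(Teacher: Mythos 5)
First, a point of orientation: the paper does not prove this statement at all. It is introduced with the words ``Let us recall a result from [Zh]'' and is simply quoted from H.~Zhang's 1997 paper, so there is no in-paper proof to compare yours against; your attempt has to be judged on its own merits.

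Your two-part strategy (drive a weight vector of a nonzero submodule down to height $0$, then show that $v_h\otimes v_{a+k}$ generates everything) is the natural shape for such an argument, but the heart of the matter, your part (i), contains a genuine gap that you flag without closing. After choosing the $c_\beta$ so that the top (level-$N$) components of $\sum_\beta c_\beta D_\beta w$ cancel, you assert that the surviving lower components are ``values at $a$ of fixed polynomials over $\Q(\dot c,h,b)$.'' They are not: they are expressions of the form $\sum_\beta c_\beta(a)\,\bigl(\text{lower-order part of }D_\beta\bigr)u_\ast$ involving the unknown vectors $u_0,\dots,u_N$, which depend on the submodule $W$ and are in no way determined by $\dot c,h,b$. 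Genericity of $a$ alone therefore cannot certify non-vanishing; one needs a structural input about the $u_n$ (for instance that $u_N$, sitting at level $N>0$ of the irreducible module $V(\dot c,h)$, is not a singular vector, so some $d_su_N\neq0$), and even then one must control the interplay between the $a$-dependent coefficients $c_\beta$ and the vectors $d_su_n$ to rule out that the entire lower part conspires to vanish. Relatedly, your argument never actually uses the hypothesis that $\bar V(\dot c,h)$ is reducible: as written it would apply verbatim when $V(\dot c,h)$ is a Verma module, a case the theorem deliberately excludes. That hypothesis must enter somewhere concrete --- most plausibly through the singular vector of the reducible Verma module, which yields a nontrivial relation in $V(\dot c,h)$ against which ``sufficiently large degree'' of $a$ can be calibrated --- and your closing paragraph acknowledges this as ``the main obstacle'' without resolving it. So the proposal is an outline rather than a proof. (Part (ii) is also stated too quickly: to reach $v_h\otimes v_{a+l}$ for $l<k$ you must first extract level-one vectors such as $d_{-1}v_h\otimes v_{a+k+1}$ from $d_1d_{-1}(v_h\otimes v_{a+k})$, so the induction on level and on position has to be interleaved; but that part is genuinely routine once set up.)
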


Now let us recall the weight modules from Lemma 2.1 in \cite{CM}.
For any $h,b,\gamma,p\in \C$, the Virasoro module $E_h(b,\gamma,p)$
has a basis $\{T_i^k|i\in \Z, k\in \Z_+\}$ with the action given by
$$\aligned \ d_n
T_i^k=&T_{i+n}^{k+1}(1-e^{nh})+T_{i+n}^k[b+i+n(p+\gamma-(\gamma+k)e^{nh})]\\
&-e^{nh}\sum_{j=0}^{k-1}T_{i+n}^jn^{k-j+1}\left[\left(
  \begin{array}{c}
    k \\
    j-1 \\
  \end{array}
\right)+\gamma\left(
  \begin{array}{c}
    k \\
    j \\
  \end{array}
              \right)\right],\\ c E_h(b,&\gamma,p)=0.\endaligned$$ \
Some sufficient conditions for $E_h(b,\gamma,p)$ to be simple were
given in \cite{CM}.  Now we can compare the modules.

\begin{theorem} Let $r\ge 1$,  $\a\in \C$, and $M$ be a simple module over
$\mathfrak{a}_r$ such that the action of $\bar d_r$ on $M$ is
injective. Then the Virasoro module $\mathcal {N}(M, \a)$ is not
isomorphic either to   $V(\dot c,h)\otimes V'_{a,b}$ for any $a, b,
\dot c, h\in\C$ or to  $E_h(b,\gamma,p)$ for any $b,h,\gamma,p\in
\C$.
\end{theorem}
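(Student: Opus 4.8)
The plan is to isolate from Lemma~\ref{key-compute} an isomorphism invariant that $\mathcal{N}(M,\a)$ enjoys but the modules of Zhang and of Conley--Martin do not. Call a $\Vir$-module $X$ of \emph{finite width} if there is an integer $N$ with $\omega^{(s)}_{l,m}X=0$ for all $l,m\in\Z$ and all $s>N$. By Lemma~\ref{key-compute}, $\mathcal{N}(M,\a)$ has finite width (with $N=2r+2$). Since every $\omega^{(s)}_{l,m}$ lies in $U(\Vir)$, any $\Vir$-isomorphism intertwines these operators, so finite width is preserved under isomorphism; hence it suffices to show that neither $V(\dot c,h)\otimes V'_{a,b}$ nor $E_h(b,\ga,p)$ has finite width. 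For this I reformulate the condition via finite differences: fixing $l$ and a vector $w$, put $f(m)=\omega^{(0)}_{l,m}w=d_{l-m}d_m w$; then $\omega^{(s)}_{l,m}w=\sum_{i=0}^s\binom{s}{i}(-1)^{s-i}f(m+i)$ is exactly the $s$-th forward difference of $f$ in $m$. Thus finite width forces each such $f$ to be a \emph{polynomial} in $m$ of degree at most $N$; to break finite width it is enough to produce, in each target module, a vector $w$ and an index $l$ for which $f$ is not polynomial, or a family of vectors whose associated $f$'s have unbounded degree.

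For the tensor product $V(\dot c,h)\otimes V'_{a,b}$ I take $l=0$ and $w=v_h\otimes v_a$, and I track only the coefficient of $v_h\otimes v_a$ in $d_{-m}d_m(v_h\otimes v_a)$; this is robust because it never involves possibly-vanishing images of singular vectors in the irreducible quotient. Using $d_iv_h=0$ for $i>0$ and $d_iv_{a+k}=(a+k+ib)v_{a+k+i}$, a direct computation shows that this coefficient equals a fixed quadratic $P(m)=(a+mb)(a+m-mb)$ for $m>0$, whereas for $m<0$ it equals $P(m)+2hm-\tfrac{\dot c}{12}(m^3-m)$, the extra term coming from $d_{-m}d_m v_h=\big(2hm-\tfrac{\dot c}{12}(m^3-m)\big)v_h+\cdots$ on the highest weight side. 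The correction $Q(m)=2hm-\tfrac{\dot c}{12}(m^3-m)$ is the zero polynomial only when $(\dot c,h)=(0,0)$, in which case $V(\dot c,h)$ is the trivial module and the tensor product has one-dimensional weight spaces, hence is certainly not isomorphic to $\mathcal{N}(M,\a)$. Otherwise $f$ agrees with two different polynomials on $\{m>0\}$ and $\{m<0\}$, so it is not polynomial and the module has infinite width.

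For the Conley--Martin module $E_h(b,\ga,p)$ I again take $l=0$ and split on whether $e^{h}=1$. If $e^{h}\neq1$, I test on $w=T^0_i$ and read off the coefficient of $T^2_i$ in $d_{-m}d_m T^0_i$, which the action formula gives as $(1-e^{mh})(1-e^{-mh})=2-e^{mh}-e^{-mh}$. Its $s$-th difference is $-(e^{h}-1)^se^{mh}-(e^{-h}-1)^se^{-mh}$, which is nonzero for every $s\geq1$ (the two exponential terms cannot cancel once $e^{h}\neq1$); hence $\omega^{(s)}_{0,m}T^0_i\neq0$ for all $s$ and the width is infinite. If $e^{h}=1$ the leading term of the action drops out and $d_mT^k_i$ acquires a summand of the shape $m^{k+1}$ times a lower basis vector, so $d_{-m}d_mT^k_i$ is a polynomial in $m$ whose degree grows with $k$; no single $N$ bounds all these degrees, so again the width is infinite.

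The routine parts are the two coefficient computations. The genuine care is needed at the degenerate parameters, where the clean witness collapses: for the tensor product this is $(\dot c,h)=(0,0)$ (handled by the triviality of $V(0,0)$), and for Conley--Martin it is $e^{h}=1$ (handled by the unbounded-degree argument). I expect the $e^{h}=1$ case to be the main obstacle, since there one must confirm that the relevant coefficient really is a nonzero polynomial of degree growing with $k$ (checking, e.g., that the leading power of $m$ does not cancel, which depends mildly on whether $\ga=0$); by contrast, establishing the isomorphism-invariance of finite width and the finite-difference reformulation is the conceptual crux but is short.
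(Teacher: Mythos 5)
Your ``finite width'' invariant is exactly the content of Lemma~\ref{key-compute}, and the finite-difference reformulation ($\omega^{(s)}_{l,m}w$ as the $s$-th forward difference in $m$ of $d_{l-m}d_m w$) is correct and is the same mechanism the paper exploits. For the tensor product your witness differs from the paper's: you set $l=0$ and extract the coefficient of $v_h\otimes v_a$, detecting non-polynomiality through the highest-weight/central term $Q(m)=2hm-\tfrac{\dot c}{12}(m^3-m)$, with the degenerate case $(\dot c,h)=(0,0)$ disposed of by weight-space dimensions. The paper instead takes $m>0$ and $l>m+2r+3$ so that every $d_{l-m-i}$ and $d_{m+i}$ has positive index and kills the highest weight vector, whence $\omega^{(2r+2)}_{l,m}(v_1\otimes v_2)=v_1\otimes\omega^{(2r+2)}_{l,m}v_2=0$ by the $r=0$ case of Lemma~\ref{key-compute}; this is contrasted with the injectivity of $\omega^{(2r+2)}_{l,m}$ on $\mathcal{N}(M,\a)$ and needs no case split on $(\dot c,h)$. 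Both arguments are valid; yours costs a small computation and an extra degenerate case, the paper's costs nothing. (Do note that for $V'_{0,0}$ the vector $v_a=v_0$ is zero, so you must take a nonzero weight vector $v_{a+k}$; the computation is unaffected.) For $E_h(b,\gamma,p)$ with $e^h\neq 1$ your argument coincides with the paper's: the $T^{k+2}_i$-component of $\omega^{(s)}_{0,m}T^k_i$ is $-\bigl(e^{mh}(e^h-1)^s+e^{-mh}(e^{-h}-1)^s\bigr)$, and requiring this to vanish for all $m$ forces $e^{2h}=1$ and then fails outright, so it cannot vanish identically for any $s$.

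The genuine gap is your treatment of $e^h=1$. There you propose to show that the $T^0_i$-component of $d_{-m}d_mT^k_i$ is a polynomial in $m$ of degree growing with $k$, but this requires verifying that the top-degree coefficient does not cancel, and the risk is real: for every $j$ with $0\le j\le k$, the route $T^k_i\mapsto T^j_{i+m}\mapsto T^0_i$ contributes in degree exactly $(k-j+1)+(j+1)=k+2$, so the leading coefficient is an alternating sum over all $j$ of products of the constants $\binom{k}{j-1}+\gamma\binom{k}{j}$, and its nonvanishing is not obvious (and, as you note, the shape of the argument already changes according to whether $\gamma=0$). You leave this unproved, so as written the $e^h=1$ case is open. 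The paper avoids the computation entirely: by the remark on p.~160 of \cite{CM}, $E_h(b,\gamma,p)$ is \emph{reducible} when $e^h=1$, while $\mathcal{N}(M,\a)$ is simple by Theorem~3 under the stated hypotheses, so no isomorphism is possible. You should either import that observation or complete the non-cancellation check; the former is a one-line fix.
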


\begin{proof} Let $v_1$ be the highest weight vector of $V(\dot c,h)$,   $v_2$ be a nonzero weight vector
of $V'_{a,b}$. By Example 1, $V_{a,b}\cong\mathcal {N}(V, a)$, where
$V=\C v$ is a one dimensional over the Lie algebra $\mathfrak{a}_0$
and is given by  a scalar $b\in\C$ with the action $\bar d_0v=bv$.

From   the fact that $v_1$ is a highest weight vector and Lemma
\ref{key-compute}, we have

$$\omega_{l,m}^{(3)} (v_1\otimes v_2)= v_1\otimes \omega_{l,m}^{(3)}v_2=0, \forall\ m>0, l>m+3.$$
 Hence
$$\omega_{l,m}^{(2r+2)}(v_1\otimes v_2)=v_1\otimes \omega_{l,m}^{(2r+2)}v_2=0, \forall\  m>0, l>m+2r+3.$$
However by Lemma \ref{key-compute}, the action of
$\omega_{m,l}^{(2r+2)}$ is injective on $\mathcal {N}(M, \a)$. So we
have proved $\mathcal {N}(M, \a)$ is not isomorphic to any Virasoro
module $V(\dot c,h)\otimes V'_{a,b}$.

Next suppose that $\mathcal {N}(M, \a)\cong E_h(b,\gamma,p)$ for
some $h,b,\gamma,p$. Note that $E_h(b,\gamma,p)$ is not simple if
$e^h=1$ (See Page 160 in \cite{CM}). So we have $e^h\ne 1$. Denote
$$E_{\le k}=\span\{T_i^j\in E_h(b,\gamma,p)\mid  j\le k,i\in \Z\},\forall\ k\in \Z_+.$$

From
$$\aligned&\sum_{i=0}^s{s\choose{i}}(-1)^{s-i}(1-e^{(-m-i)h})(1-e^{(m+i)h})\\
=&\sum_{i=0}^s{s\choose{i}}(-1)^{s-i}(2-e^{(-m-i)h}-e^{(m+i)h})\\
=&-e^{-mh}\sum_{i=0}^s{s\choose{i}}(-1)^{s-i}e^{-ih}-e^{mh}\sum_{i=0}^s{s\choose{i}}(-1)^{s-i}e^{ih}\\
=&-(e^{mh}(e^h-1)^s+e^{-mh}(e^{-h}-1)^s)
\endaligned$$
 we have
$$\omega_{0,m}^{(s)} T_i^k\in
-(e^{mh}(e^h-1)^s+e^{-mh}(e^{-h}-1)^s)T_i^{k+2}+E_{\le k+1},$$ for
all $k,s\in \N,i\in \Z.$

From Lemma \ref{key-compute}, we see that
$e^{mh}(e^h-1)^s+e^{-mh}(e^{-h}-1)^s=0$ for all $s>2r+2, m\in \Z,$
 which implies that $e^h=1$, a contradiction. So we have proved the theorem.
\end{proof}

 \section{Examples}

From Theorem 4 we know that, to obtain new irreducible weight
Virasoro modules $\mathcal {N}(M, \a)$, it is enough to construct
infinite dimensional irreducible modules $M$ over $\mathfrak{a}_r$
for $r>0$ such that the action of $\bar d_r$ on $M$ is injective.
These modules $M$ were considered in \cite{MZ3}. Let us recall some
of such irreducible $\mathfrak{a}_r$-modules.

\

%

  \noindent{\bf Example 2.} For any $\a_1,\a_2, \lambda_1,\lambda_2 \in \C$ with $\lambda_1$ or $\lambda_2 $ nonzero,
  we have the simple $\mathfrak{a}_2$ module
$M=\C[x]$ with the action
$$\bar{d_0} f(x)=(x+\a_1)f(x), $$ $$\bar{d_i} f(x)=\lambda_if(x-i),
i=1,2, \forall f(x)\in \C[x].$$ Then  we have the induced new
irreducible weight Virasoro module
$\Gamma(\a_1,\a_2,\lambda_1,\lambda_2)=\C[x,t,t^{-1}]$ with the
action
$$c \cdot x^it^n=0$$
$$d_m \cdot x^it^n=\Big(\big(\a_2+n+m(x+\a_1)\big)x^i+\frac{m^2}{2} \lambda_1 (x-1)^i
+\frac{m^3}{6} \lambda_2(x-2)^i\Big)t^{m+n},$$ for all $m,n\in \Z,
i\in \Z_+$. \qed

\

  \noindent{\bf Example 3.} Fix $r\in\mathbb{N}$ with $r>2$. Choose a pair
$(S,\lambda)$, where $S\subset\{1,2,\dots,r\}$ and
$\lambda=(\lambda_i)_{i\in S}\in\mathbb{C}^{|S|}$, such that the
following conditions are satisfied:
\begin{enumerate}[(I)]
\item\label{cond1} $r\in S$ and $\lambda_r\neq 0$;
\item\label{cond2} for all $i,j\in S$ with $i\neq j$,   $i+j\in S$ implies $\lambda_{i+j}=0$;
\item\label{cond3}
for all $j\in \{1,2,\dots,r\}\setminus S$, we have $r-j\in S$.
\end{enumerate}
One example
is $S=\{\lceil\frac{k}{2}\rceil,\lceil\frac{k}{2}\rceil+1,\dots,r\}$
and any $\lambda$ with  $\lambda_r\neq 0$. Another example is
$S=\{2,4,5\}$ for $r=5$ and any $\lambda$ with  $\lambda_5\neq 0$.
Our final example in this case is $S=\{3,4,6,7,8\}$ for $r=8$ and
any $\lambda$ with $\lambda_8\neq 0$ and $\lambda_7=0$ (note that
here we have $3+4\in S$).

Denote by $Q_{\lambda}$ the $\mathfrak{a}_r$-module
$U(\mathfrak{a}_r)/I$, where $I$ is the left ideal generated by $
d_{i}-\lambda_{i}$, $i\in S$. It was proved in \cite{MZ3} the
$\mathfrak{a}_r$-module $Q_{\lambda}$ is simple. Then we have
irreducible Virasoro modules $\mathcal {N}(Q_{\lambda}, \a)$ for any
$\a\in\C$.\qed

 \

 \noindent{\bf Example 4.} All irreducible modules over $\mathfrak{a}_1$ were
classified in \cite{Bl}. For example, a class of such irreducible
$\mathfrak{a}_1$-modules are of the form $M_n=U(\mathfrak{a}_1)/I$
where $I$ is the left ideal  generated by $\bar d_0^n\bar d_1-1$ for
any fixed $n\in \N$ (see \cite{AP} for details). Then we have
irreducible Virasoro modules $\mathcal {N}(M_n, \a)$ for any
$\a\in\C$.\qed

 \

 \noindent{\bf Example 5.} All irreducible modules over $\mathfrak{a}_2$ were
classified in \cite{MZ3}. Let $\psi:\mathfrak{a}_2\to
\mathfrak{a}_1$ be the unique Lie algebra epimorphism which sends
\begin{displaymath}
\bar d_0\to \frac{1}{2}\bar d_0,\quad \bar d_1\to 0,\quad \bar
d_2\to \bar d_1.
\end{displaymath}
For any $\lambda\in \mathbb{C}$, mapping
\begin{displaymath}
\bar d_0\to \bar d_0,\quad \bar d_1\to \bar d_1,\quad \bar d_2\to
\lambda\bar d_1^2
\end{displaymath}
extends to an epimorphism $\varphi_{\lambda}:U(\mathfrak{a}_2)\to
U(\mathfrak{a}_1)$.

For any simple $\mathfrak{a}_1$-module $M$, we have two different
ways to make $M$ into an irreducible $\mathfrak{a}_2$-modules:
$xv=\psi(x)v$ for all $x\in \mathfrak{a}_2$ and $v\in M$; or
$xv=\varphi_{\lambda}(x)v$ for all $x\in \mathfrak{a}_2$ and $v\in
M$. These exhaust  all irreducible $\mathfrak{a}_2$-modules. Then we
have irreducible Virasoro modules $\mathcal {N}(M, \a)$ for any
$\a\in\C$. Modules in Example 2 are special cases.\qed

 \section{Deformation of the modules $\mathcal {N}(M, 0)$}

In this section we will use the irreducible Virasoro modules
$\mathcal {N}(M, 0)$  and ``the twisting technique" to construct new
irreducible non-weight modules over the Virasoro algebra
$\mathfrak{V}$ with trivial action of the center. Let us first
recall the definition of the twisted Heisenberg-Virasoro algebra.

\

 The twisted Heisenberg-Virasoro algebra $\LL$ is the universal
central extension of the Lie algebra $\{f(t)\frac{d}{dt}+g(t)| f,g
\in \C[t,t^{-1}]\}$ of differential operators of order at most one
on the Laurent polynomial algebra $\C[t,t^{-1}]$. More precisely,
the twisted Heisenbeg-Virasoro algebra ${\LL}$ is a Lie algebra over
$\C$ with the basis
$$\{d_n,t^n ,z_1, z_2,z_3 \mid n \in \Z\}$$
and subject to the Lie bracket
\begin{equation}[d_n,d_m]=(m-n)d_{n+m}+\delta_{n,-m}\frac{n^3-n}{12}z_1,\end{equation}
\begin{equation}[d_n,t^m]=mt^{m+n}+\delta_{n,-m}(n^2+n)z_2, \end{equation}
\begin{equation}[t^n,t^m]=n\delta_{n,-m}z_3,\end{equation}
\begin{equation}[\LL,z_1]=[\LL,z_2]=[\LL,z_3]=0. \end{equation}

The Lie algebra $\LL$ has a Virasoro subalgebra $\mathfrak{V}$ with
basis $\{d_i,z_1|i\in \Z\}$, and a Heisenberg subalgebra
$\tilde{\mathcal {H}}$ with basis $\{t^i, z_3\mid i\in \Z\}$.

\

Let us start with  a  Virasoro module $\mathcal {N}(M, 0)$ defined
in (2.1) and (2.2) with $\a=0$. On $\mathcal {N}(M, 0)$ We can
extend the Virasoro structure to a module structure over $\LL$ as
follows
$$t^k (v\otimes t^n)= v\otimes t^{n+k}, \forall\  k,n\in\Z,v\in M,$$
$$z_1 \mathcal {N}(M, 0)=0, z_2 \mathcal {N}(M, 0)=0, z_3 \mathcal {N}(M, 0)=0.$$

 For any $\b\in \C[t, t^{-1}]$, by using the twisting technique (see (2.3) in \cite{LGZ})
 we define the new action of $\mathfrak{V}$ on $\mathcal {N}(M, 0)$ by

\begin{equation}d_n \circ v=(d_n +\beta t^n) v,\forall \,\, n\in \Z,
v\in \mathcal {N}(M, 0),\end{equation}

\begin{equation}c\circ \mathcal {N}=0.\end{equation}
 We denote by $\mathcal {N}(M, \beta)$ the resulting module over $\mathfrak{V}$.

If $\beta\in\C$, we can easily see that $\mathcal {N}(M, \beta)$ is
exactly the modules define in Sect.2. If $M$ is one dimensional, it
is not hard to see that $\mathcal {N}(M, \beta)=\C[t,t^{-1}]$ with
actions
$$d_m t^n=(\b+n+bm)t^{n+m},\forall m,n\in\Z
$$ for some $b\in\C$.
These are exactly the modules studied in \cite{LGZ}. Recall that
this module is reducible if and only if $b=0$ and $\a\in \Z$, or $
b=1$ and $\a\in \Z\cup(\C[t^{\pm}]\setminus\C)$, see \cite{LGZ} for
the details.

\

Now we can prove the irreducibility of the modules $\mathcal {N}(M,
\beta)$.

\

\begin{theorem} Let $r\ge 1$, $M$ be a simple module over
$\mathfrak{a}_r$ such that the action of $\bar d_r$ on $M$ is
injective. Then for any $\beta\in \C[t, t^{-1}]\setminus \C$, the
Virasoro module $\mathcal {N}(M, \beta)$ is simple.
\end{theorem}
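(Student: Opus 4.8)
The plan is to prove simplicity directly by a minimal-support argument, whose engine is that the operators $\omega_{l,m}^{(s)}$ of Lemma~\ref{key-compute} act on $\mathcal {N}(M,\beta)$ \emph{exactly} as they do in the untwisted module. Write $d_m\circ$ for the twisted action and, for $X\in U(\Vir)$, write $X\circ$ for the induced operator on $\mathcal {N}(M,\beta)$. Since $d_m\circ = d_m+\beta t^m$ (with $d_m$ the action (2.1) at $\a=0$ and $t^m$ the shift $v(n)\mapsto v(n+m)$), expanding $\omega_{l,m}^{(s)}\circ=\sum_i\binom{s}{i}(-1)^{s-i}(d_{l-m-i}\circ)(d_{m+i}\circ)$ produces the untwisted product $d_{l-m-i}d_{m+i}$ together with three ``$\beta$-corrections'' $d_{l-m-i}\beta t^{m+i}$, $\beta t^{l-m-i}d_{m+i}$ and $\beta^2 t^{l}$. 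As a function of $i$ the untwisted product is a polynomial of degree $2r+2$ with leading coefficient a multiple of $\bar d_r^2$, whereas each $\beta$-correction has degree at most $r+1$; hence the $(2r+2)$-fold difference annihilates every correction and one obtains, verbatim as in Lemma~\ref{key-compute},
$$\omega_{l,m}^{(2r+2)}\circ (v\otimes t^n)=(2r+2)!(-1)^{r+1}\bar d_r^2\,v\otimes t^{n+l},\qquad \omega_{l,m}^{(s)}\circ\mathcal {N}(M,\beta)=0\ (s>2r+2).$$
Thus the operator $\Omega_l\colon v\otimes t^n\mapsto \bar d_r^2\,v\otimes t^{n+l}$ lies in the image of $U(\Vir)$ acting through $\circ$, so any nonzero submodule $W$ is stable under all $\Omega_l$.

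Next I would run a minimal-support reduction. Let $0\neq w=\sum_{n\in F}u_n\otimes t^n\in W$ (finite $F$, all $u_n\neq 0$) have $|F|$ minimal, and suppose $F=\{n_1<\cdots<n_d\}$ with $d\geq 2$. The obstacle is that the weight grading is destroyed: each $d_m\circ$ spreads a term over the support of $\beta$. The observation that removes this is that the spread can be \emph{subtracted back inside} $W$: since $\Omega_{l'}\circ w\in W$ for all $l'$ and $\beta\cdot(\Omega_l\circ w)=\sum_k\beta_k\,(\Omega_{l+k}\circ w)$, multiplication by $\beta$ sends $\Omega_l\circ w$ into $W$. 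Writing $\partial$ for the weight operator $v\otimes t^n\mapsto n\,v\otimes t^n$, one has $\partial=d_0\circ-\beta\cdot$, whence
$$\partial(\Omega_l\circ w)=(d_0\circ)(\Omega_l\circ w)-\beta\cdot(\Omega_l\circ w)\in W,$$
and explicitly $\partial(\Omega_l\circ w)=\sum_{j}(n_j+l)\,\bar d_r^2 u_{n_j}\otimes t^{n_j+l}$. Subtracting $(n_1+l)\,\Omega_l\circ w\in W$ gives
$$\sum_{j\geq 2}(n_j-n_1)\,\bar d_r^2 u_{n_j}\otimes t^{n_j+l}\in W,$$
a nonzero element (as $\bar d_r$ is injective and $n_j\neq n_1$) of support size $d-1$, contradicting minimality. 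Hence $|F|=1$, so $W$ contains some $u\otimes t^{n_0}$ with $u\neq 0$.

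Finally I would fill out $W$. Applying $\Omega_l$ and using that $\bar d_r$ is bijective (Lemma~\ref{trivial}), $W$ contains $v'\otimes t^n$ for $v'=\bar d_r^2 u\neq 0$ and all $n\in\Z$. Fixing a target degree $N_0$, applying $d_m\circ$ to $v'\otimes t^{N_0-m}$ and subtracting the spread $\sum_k\beta_k\,v'\otimes t^{N_0+k}\in W$ and the scalar part $(N_0-m)\,v'\otimes t^{N_0}\in W$ leaves $\sum_{i=0}^r\frac{m^{i+1}}{(i+1)!}\bar d_i v'\otimes t^{N_0}\in W$ for every $m$; a Vandermonde argument in $m$ then yields $\bar d_i v'\otimes t^{N_0}\in W$ for $i=0,\dots,r$ and all $N_0$. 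Consequently $S=\{v\in M\mid v\otimes t^n\in W\ \forall n\}$ is a nonzero $\mathfrak{a}_r$-submodule of $M$, so $S=M$ by simplicity and $W=\mathcal {N}(M,\beta)$. The step I expect to be the main obstacle is the degree bookkeeping showing the $\beta$-corrections die under the $(2r+2)$-fold difference, together with the point that $\beta$-multiplication preserves $W$ once sufficiently many shifts $\Omega_l\circ w$ are known to lie in $W$; granting these, the remainder mirrors the simplicity proof for $\mathcal {N}(M,\a)$.
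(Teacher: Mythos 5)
Your proof is correct. The engine is identical to the paper's: you show that each $\beta$-correction in the expansion of $\omega_{l,m}^{(s)}\circ$ is a polynomial in $m$ of degree at most $r+1$ (the paper's proof allows degree $r+2$; either bound is below $2r+2$), so the $(2r+2)$-fold difference reproduces the untwisted action and places every shift $\Omega_l\circ w$ inside a nonzero submodule $W$. Where you genuinely diverge is in the conclusion. The paper introduces $W'=\{w\in W\mid t^iw\in W\ \forall i\in\Z\}$, observes that the vector $\sum_i\bar d_r^2v_i\otimes t^i$ lies in $W'$ so $W'\ne 0$, checks via $d_iv=d_i\circ v-t^i\beta v$ that $W'$ is stable under the \emph{untwisted} action, and then simply invokes the already-proved simplicity of the untwisted module $\mathcal{N}(M,0)$ to get $W'=\mathcal{N}(M,0)$ and hence $W=\mathcal{N}(M,\beta)$. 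You instead run a self-contained minimal-support reduction, recovering the weight operator as $\partial=d_0\circ-\beta\cdot$ (legitimate on $\Omega_l\circ w$ precisely because $\beta\cdot(\Omega_l\circ w)=\sum_k\beta_k\,\Omega_{l+k}\circ w\in W$), collapsing to a single $t$-degree, and then re-deriving the $\mathfrak{a}_r$-submodule property of $S$ by the Vandermonde argument --- in effect reproving Claims 1 and 2 from the proof of the simplicity of $\mathcal{N}(M,\a)$ inside the twisted module rather than citing that theorem. Your route is longer but self-contained at the final step; the paper's $W'$ device is the more economical reduction. All the steps you flagged as potential obstacles (the degree bookkeeping and the $\beta$-stability of $W$ on the span of the $\Omega_l\circ w$) check out, so the argument stands.
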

\begin{proof} Suppose that $W\ne 0$ is a Virasoro submodule of $\mathcal {N}(M,
\beta)$. It suffices to show that $W=\mathcal {N}(M, \beta)$. Let us
fix a nonzero $ w=\sum_{i\in \Z}v_i\otimes t^i\in W$, where the sum
is finite. We will use the element $\omega_{l,m}^{(0)}$ defined in
(3.1). For any $l, m\in \Z$, the vector $\omega_{l,m}^{(0)}\circ w$
can be written as
$$\aligned\omega_{l,m}^{(0)}\circ w=&\ d_{l-m}\circ d_m\circ w\\
=&\ (d_{l-m}+\beta t^{l-m})(d_m+\beta t^m) w\\
=&\ \omega_{l,m}^{(0)} w+d_{l-m}\beta t^mw+\beta
t^{l-m}d_mw+\beta^2t^lw\\
=&\ \omega_{l,m}^{(0)} w+ \sum_{i=0}^{r+2} m^j w_{j,0},\endaligned$$
where $w_{j,0}\in \mathcal {N}(M, 0)$ is independent of $m$. By
induction on $s$ we can prove that
$$\omega_{l,m}^{(s)}\circ w=\omega_{l,m}^{(s)} w+ \sum_{i=0}^{r+2-s} m^j
w_{j,s}$$ where $w_{j,s}\in \mathcal {N}(M, 0)$ is independent of
$m$. Using Lemma 3 we see that
\begin{equation}\omega_{l,m}^{(2r+2)}\circ
w=\omega_{l,m}^{(2r+2)}w=(2r+2)!(-1)^{r+1}\sum_i\bar d_r^2
v_{i}\otimes t^{i+l}\in W,\end{equation}
 for all $l,  m\in \Z$.
 In particular,   $W'=\{w\in W| t^i w\in W, \forall i\in \Z\}$
 is nonzero. Now for any $v\in W'$, we have
 $d_i v=d_i\circ v-t^i\beta v\in W$, and $$t^j(d_i v)=-j t^{i+j}v+d_i\circ
 (t^j v)-t^{i+j}\beta v\in W,\,\,\,\forall\  i,j\in \Z.$$
 So $d_iv\in W'$ for all $v\in W'$ and $i\in\Z$. Hence $W'$ is a nonzero submodule
 of the simple Virasoro module $\mathcal {N}(M, 0)$, which has to be
  $\mathcal {N}(M, 0)=\mathcal {N}(M, \beta)$. Therefore $W'=\mathcal {N}(M,
  \beta)$, and consequently, $W=\mathcal {N}(M, \beta)$.
\end{proof}

We like to conclude  this paper by comparing the nonweight
irreducible Virasoro modules $\mathcal {N}(M, \beta)$ with other
known irreducible Virasoro modules. When $\dim M=1$, $\mathcal
{N}(M, \beta)$ are exactly the modules studied in \cite{LGZ}. The
following result handles the remaining cases.

\begin{theorem}\label{Comp} Let $r\ge 1$,  $\beta\in \C[t,t^{-1}]\setminus \C$,
and $M$ be a simple module over $\mathfrak{a}_r$ such that the
action of $\bar d_r$ on $M$ is injective. Then $\mathcal {N}(M,
\beta)$ is a new irreducible Virasoro module.\end{theorem}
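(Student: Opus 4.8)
The module $\mathcal{N}(M,\b)$ is irreducible by the preceding theorem, so the plan is to show it is isomorphic to none of the irreducible Virasoro modules constructed before this paper. Since the center acts trivially, the relevant families are: the weight modules (both the classical Harish--Chandra modules and the new modules $\mathcal{N}(M',\a')$ of Section 3); the Whittaker-type nonweight modules unified in \cite{MZ3}, together with those of \cite{MW}; the Weyl modules of \cite{LZ}; and the $\dim M=1$ modules of \cite{LGZ}. Because $r\ge1$ forces $\bar d_r$ to act injectively, $M$ must be infinite dimensional --- a one dimensional $\mathfrak{a}_r$-module has $\bar d_r$ acting by $0$ --- so the remark preceding Theorem~\ref{Comp} already disposes of the modules in \cite{LGZ}, and it remains to separate $\mathcal{N}(M,\b)$ from the other three families.

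First I would rule out every weight module at once by showing that $\mathcal{N}(M,\b)$ is not a weight module. Write $\b=\sum_k \b_k t^k$. On $\mathcal{N}(M,0)$ the operator $d_0$ acts on $M\otimes t^j$ by the scalar $j$, so after twisting $d_0\circ$ equals $\mathrm{id}_M\otimes(t\frac{d}{dt}+\b)$, where $\b$ acts on $\C[t,t^{-1}]$ by multiplication. Solving $(t\frac{d}{dt}+\b)f=\mu f$ gives $f=t^{\mu-\b_0}\exp\!\big(-\sum_{k\ne0}\tfrac{\b_k}{k}t^k\big)$, which is not a Laurent polynomial once $\b\notin\C$; hence $t\frac{d}{dt}+\b$ has no eigenvector in $\C[t,t^{-1}]$, and therefore $d_0\circ$ has no eigenvector in $\mathcal{N}(M,\b)$. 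A nonzero weight module is spanned by $d_0$-eigenvectors, so $\mathcal{N}(M,\b)$ is non-weight and differs from all weight modules, in particular from every $\mathcal{N}(M',\a')$.

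Next I would separate $\mathcal{N}(M,\b)$ from the Whittaker-type modules. All of these (those unified in \cite{MZ3} as well as those of \cite{MW}) are locally finite over $d_n$ for all sufficiently large $n$: each vector lies in a finite dimensional subspace invariant under such $d_n$. I claim $\mathcal{N}(M,\b)$ admits no such local finiteness. Writing a nonzero $w=\sum_i v_i\otimes t^i$ with top $t$-degree $i_0$ (so $v_{i_0}\ne0$), for $n>0$ large the top graded piece of $d_n\circ w$ sits in $t$-degree at least $i_0+n$ and is nonzero: it equals $\b_{d^+}v_{i_0}\otimes t^{i_0+n+d^+}$ when $\b$ has positive top degree $d^+$, and otherwise is governed by the injective term $\frac{n^{r+1}}{(r+1)!}\bar d_r v_{i_0}$ in degree $i_0+n$. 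Iterating, the vectors $(d_n\circ)^k w$ have strictly increasing top $t$-degrees, hence are linearly independent, so $w$ generates an infinite dimensional space under $d_n$. Thus $\mathcal{N}(M,\b)$ is not locally finite over any single $d_n$ with $n\gg0$; the injectivity of $\bar d_r$ from Lemma~\ref{trivial} is exactly what drives this, and it excludes the modules of \cite{MZ3} and \cite{MW}.

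Finally, for the Weyl modules of \cite{LZ} (and any residual nonweight module failing the local finiteness above) I would use the invariant supplied by the operators $\omega_{l,m}^{(s)}$. By construction $\omega_{l,m}^{(s)}$ is the $s$-th finite difference, in the index $m$, of $\omega_{l,m}^{(0)}=d_{l-m}d_m$; hence on any module $\omega_{l,m}^{(s)}$ acts by zero for all $s>D$ precisely when $m\mapsto d_{l-m}d_m$ is operator-polynomial of degree at most $D$ in $m$. The proof of the preceding theorem together with Lemma~\ref{key-compute} shows that on $\mathcal{N}(M,\b)$ this degree is exactly $2r+2$: one has $\omega_{l,m}^{(s)}\circ=0$ for $s>2r+2$, while $\omega_{l,m}^{(2r+2)}\circ$ is independent of $m$ and injective (being essentially $\bar d_r^2$ up to a $t$-shift). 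Thus the finite number $2r+2$ is an isomorphism invariant recovering $r$. On a Weyl module the action of $d_n$ is not polynomial in $n$ (the realization carries a factor of the form $\lambda^{n}$), so $m\mapsto d_{l-m}d_m$ is non-polynomial and, exactly as in the $E_h(b,\gamma,p)$ computation of Section 4, the expressions $\lambda^{\pm m}(\lambda^{\pm1}-1)^s$ never vanish; the invariant is infinite there and the modules cannot be isomorphic. The principal obstacle is precisely this last stage: the earlier nonweight modules form a heterogeneous collection with no single common structural feature, so they must be handled through two different invariants --- failure of local finiteness over a large $d_n$, and the finite-difference degree $2r+2$ --- and the delicate verification is that no previous construction is simultaneously non-weight, non-locally-finite in the above sense, and of finite-difference degree exactly $2r+2$ with injective top operator $\omega_{l,m}^{(2r+2)}$.
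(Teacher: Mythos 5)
Your proposal contains a genuine gap at the step that excludes the modules of \cite{MW}. You assert that the modules $W=\mathrm{Ind}_{\theta,z}(\C_{\bf m})$ are locally finite over $d_n$ for all sufficiently large $n$. They are not: $W$ is induced from a one-dimensional module over a codimension-one subalgebra of the positive part, so $d_1$ acts freely on the generator $v$, and the defining relations give $d_nv=z^{n-1}d_1v+(\text{scalar})\,v$ for $n\ge 2$. An easy induction using $[d_n,d_1]=(1-n)d_{n+1}$ yields $d_n^kv=z^{k(n-1)}d_1^kv+(\text{lower }d_1\text{-degree terms})$, and the vectors $d_1^kv$ are linearly independent by PBW; hence $v$ generates an infinite-dimensional space under every single $d_n$ with $n\ge 2$, and your invariant fails to separate $\mathcal{N}(M,\b)$ from these modules. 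The paper instead exploits the relation $(d_{i+1}-2zd_{i}+z^2d_{i-1})v=0$ for $i\ge 4$, which forces the specific combination $\omega_{5,5}^{(2r+2)}-2z\,\omega_{4,4}^{(2r+2)}+z^2\omega_{3,3}^{(2r+2)}$ to annihilate $v$, whereas by (6.7) this operator kills no nonzero vector of $\mathcal{N}(M,\b)$.

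Your treatment of the Weyl modules $A_b$ of \cite{LZ} also rests on a mischaracterization: there $d_n$ acts as $t^{n+1}\frac{d}{dt}+nbt^{n}$, with no factor of the form $\lambda^{n}$, and a direct computation (carried out in the paper) shows that $\omega_{l,m}^{(s)}$ annihilates $A_b$ for every $s\ge 3$; the finite-difference degree there is $2$, not infinite as you claim. The desired conclusion still follows, since $2r+2\ge 4$ while $\omega_{l,m}^{(2r+2)}\circ$ has trivial kernel on $\mathcal{N}(M,\b)$, but for the opposite reason to the one you give. On the positive side, your argument that $\mathcal{N}(M,\b)$ is not a weight module when $\b\notin\C$ (no $d_0\circ$-eigenvectors, by a top-degree comparison in $t$) is correct and fills in a point the paper leaves implicit, and your exclusion of the induced modules of \cite{MZ3}, where $d_kv=0$ for $k\gg 0$ and hence $\omega_{l,m}^{(2r+2)}v=0$ for $l>2m\gg 0$, agrees in substance with the paper's.
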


\begin{proof} All other known non-weight Virasoro modules are from
\cite{LZ, MZ3, MW}. Let us recall those modules defined in
\cite{MZ3}. Let $\mathfrak{V}_+$ $=\span\{d_i\,|\,i\in\Z_+\}$. Given
$N\in {\mathfrak{V}}_+\text{-}\mathrm{mod}$ and
$\theta\in\mathbb{C}$, consider the corresponding induced module
$\mathrm{Ind}(N):=U(\mathfrak{V})\otimes_{U(\mathfrak{V}_+)}N$ and
denote by $\mathrm{Ind}_{\theta}(N)$ the module
$\mathrm{Ind}(N)/(\mathtt{c}-\theta)\mathrm{Ind}(N)$. It was proved
that $V=\mathrm{Ind}_{\theta}(N)$ is irreducible over $\mathfrak{V}$
if $d_kN=0$ for all sufficiently large $k$. For any $v\in V$ we have
$d_kv=0$ for all sufficiently large $k$. Now we use the operator
$\omega_{l,m}^{(2r+2)}$ defined in (3.1). From (6.7) we know that
$\omega_{l,m}^{(2r+2)}\circ w\ne0$ for any nonzero $w\in \mathcal
{N}(M, \beta)$ and any $l,m\in\Z$, while for any nonzero $v\in V$ we
have $\omega_{l,m}^{(2r+2)}v=0$ if  $l>2m>>0$. Thus $\mathcal {N}(M,
\beta)$ is not isomorphic to $V$.

\

Now we compare our module $\mathcal {N}(M, \beta)$ with the
irreducible Virasoro modules $W=$Ind$_{\theta, z}(\C_{\bf m})$
defined in \cite{MW}, where $\theta, m_2,m_3,$ $ m_4\in\C$ and
$z\in\C^*$ satisfying the conditions \begin{equation}zm_3\ne m_4,
2zm_2\ne m_3,3zm_3\ne 2m_4,z^2m_2+m_4 \ne2z m_3.\end{equation} There
is a nonzero vector $v\in W$ such that
$$(d_2-zd_1)v=m_2v,\,\,\,(d_3-z^2d_1)v=m_3v,\,\,\,(d_4-z^3d_1)v=m_3v.$$
And $$(d_i-z^{i-1}d_1)v=-(i-4)m_3z^{i-3}v+(i-3)m_4z^{i-4}v,\ \forall
\ i>4.$$

Then
$$\aligned &\left(d_{i+1}-2zd_{i}+z^2d_{i-1}\right)v\\
=&-(i-3)m_3z^{i-2}v+(i-2)m_4z^{i-3}v\\
& +2(i-4)m_3z^{i-2}v-2(i-3)m_4z^{i-3}v\\
& -(i-5)m_3z^{i-2}v+(i-4)m_4z^{i-3}v\\
=& \ 0,\  \forall\ i\ge4.
\endaligned$$
By (3.1), we see that $$\aligned &(\omega_{5,5}^{(2r+2)}-2z
\omega_{4,4}^{(2r+2)}+z^2\omega_{3,3}^{(2r+2)})v\\
=&\sum_{i=0}^{2r+2}
    {2r+2\choose{i}}
                (-1)^{2r+2-i}d_{-i}(
d_{5+i}-2zd_{4+i}+z^2d_{3+i})v=0.\endaligned$$ While we know from
(6.7) that $(\omega_{5,5}^{(2r+2)}-2z
\omega_{4,4}^{(2r+2)}+z^2\omega_{3,3}^{(2r+2)})\circ w\ne0$ for any
nonzero $w\in \mathcal {N}(M, \beta)$. So $\mathcal {N}(M, \beta)$
is not isomorphic to $W$.

\

At last we compare our module $\mathcal {N}(M, \beta)$ with the
irreducible Virasoro modules $A_b$ defined in \cite{LZ} where
$b\in\C\setminus\{1\}$ and $A_b$ is an irreducible module over the
associative algebra $\C[t, t^{-1},\frac d{dt}]$ such that $\C[\frac
d{dt}]$ is torsion-free on $A_b$ (otherwise $A_b$ is a weight
module). Recall that $\partial=t\frac d{dt}$. From (3.5) in
\cite{LZ} we deduce that, for any $v\in A_b$,
$$\aligned\omega_{l,m}^{(0)}\circ v=&\ d_{l-m}\circ d_m\circ v\\
=&\ \big(t^{l-m}\partial+(l-m)bt^{l-m}\big)(t^{m}\partial+mbt^{m})v\\
=&\ t^{l}(\partial^2+(m+lb)\partial+(m(1-b)+lb)mb)v.
\endaligned$$Then we can compute

$$\aligned &\omega_{l,m}^{(s)}\circ v=\sum_{i=0}^s {s\choose{i}}(-1)^{s-i}
d_{l-m-i}\circ d_{m+i}\circ v\\
=&\ t^{l}\sum_{i=0}^s
{s\choose{i}}(-1)^{s-i}\left(\partial^2+(m+i+lb)\partial+((m+i)(1-b)+lb)(m+i)b\right)v\\
=&\ t^{l}\sum_{i=0}^s
{s\choose{i}}(-1)^{s-i}\left(i\partial+i^2(1-b)b+b(2m(1-b)
+lb)i\right)v\\
=& \ 0,\ \forall\ l,m\in\Z, s\ge 3, v\in A_b.
\endaligned$$

 From (6.7) we know that  our module $\mathcal
{N}(M, \beta)$ does not satisfy this property. Thus $\mathcal {N}(M,
\beta)$ is not isomorphic to any $A_b$. This completes the proof.
\end{proof}



\vspace{1cm}

\noindent  G.L.: College of Mathematics and Information Science,
Henan University, Kaifeng, Henan  475004, China. Email:
liugenqiang@amss.ac.cn

\vspace{0.2cm} \noindent  R.L.: Department of Mathematics, Suzhou
university, Suzhou 215006, Jiangsu, P. R. China.
 Email: rencail@amss.ac.cn

\vspace{0.2cm} \noindent K.Z.: Department of Mathematics, Wilfrid
Laurier University, Waterloo, ON, Canada N2L 3C5,  and College of
Mathematics and Information Science, Hebei Normal (Teachers)
University, Shijiazhuang, Hebei, 050016 P. R. China. Email:
kzhao@wlu.ca

\end{document}